\newcommand{\dd}{\mathcal{\dagger}}
\newcommand{\ts}{\mathsf{T}}
\tikzstyle{decision} = [diamond, draw, fill=blue!20, 
\tikzstyle{block} = [rectangle, draw, fill=blue!20, 
\tikzstyle{line} = [draw, -latex']
\tikzstyle{cloud} = [draw, ellipse,fill=red!20, node distance=3cm,
 \newtheoremstyle{mystyle}{36pt}{}{}{}{\bfseries}{.}{ }{}
  \theoremstyle{plain}
 \newtheorem{theorem}{Theorem}
 \newtheorem{definition}[]{Definition}
 \newtheorem{proposition}[]{Proposition}
 \newtheorem{example}{Example}
  \theoremstyle{remark}
 \newtheorem{remark}{Remark}
\tikzset{main node/.style={circle,fill=blue!20,draw,minimum size=1cm,inner sep=0pt},  }
\begin{document}
\title{Wasserstein Hamiltonian flows}
\author[Chow, Li, Zhou]{Shui-Nee Chow, Wuchen Li and Haomin Zhou}
\keywords{Optimal transport; Density manifold; Hamiltonian flow.}
\maketitle
\begin{abstract}
We establish kinetic Hamiltonian flows in density space embedded with the $L^2$-Wasserstein metric tensor. We derive the Euler-Lagrange equation in density space, which introduces the associated Hamiltonian flows. We demonstrate that many classical equations, such as Vlasov equation, Schr{\"o}dinger equation and Schr{\"o}dinger bridge problem, can be rewritten as the formalism of Hamiltonian flows in density space.  
\end{abstract}
\section{Introduction}
In recent years, optimal transport theory provides essential tools for partial differential equations \cite{vil2003, vil2008}. It introduces a type of distance functions in the space of probability densities, which evolve differential structures in the underlying sample space. A particular distance function, named $L^2$-Wasserstein distance, exhibits the metric tensor structure. The density space with this metric forms an infinite-dimensional Riemannian manifold, named density manifold \cite{Lafferty}. Many well-known density equations are gradient flows in density manifold \cite{Otto}. A famous example is a Fokker-Planck equation with gradient drift vector field. It mathematically demonstrates an intuition: The density of gradient flow in sample space is gradient flow in density manifold. 

Despite various successful studies of gradient flows, the other essential flows in density manifold, Hamiltonian flows, are not completely clear. See a detailed discussion in page 253 of \cite{vil2003}. It is because that a typical kinetic Hamiltonian flow in manifold (including density manifold) relies on the associated Christoffel symbol \cite{li_geometry}. This paper takes a natural first step in this direction. Following key ideas in \cite{li_SE, li_geometry, Nelson0}, we establish the formalism of Hamiltonian flows for density manifold in Theorem \ref{ELD}. It directly follows from the variational principle in tangent bundles of density manifold. In other words, we propose to study the following second order equation:
\begin{equation*}
\partial_{tt}\rho_t-\Big(\Delta_{\partial_t\rho_t}\Delta_{\rho_t}^{\dagger}\partial_t\rho_t+\frac{1}{2}\Delta_{\rho_t}(\nabla \Delta_{\rho_t}^{\dagger}\partial_t\rho_t)^2\Big)=\nabla\cdot(\rho_t\nabla\frac{\delta}{\delta\rho_t}\mathcal{F}(\rho_t)).
\end{equation*}
where $\rho(t,x):=\rho_t$ is the density function, $\partial_{tt}$ is the second time derivative, $\Delta_{\rho}=\nabla\cdot(\rho\nabla )$ is an elliptic operator, and $\mathcal{F}(\rho)$ is a given energy functional. Here the coefficient of quadratic formulation for $\partial\rho_t$ is the Christoffel symbol in density manifold. Given various energies, we will show that the above equation is the other formulation of many classical equations, including Vlasov equation, Schr{\"o}dinger equation and Schr{\"o}dinger bridge problem.  

In literature, the study of Hamiltonian flows in density manifold follows Nelson's stochastic mechanics \cite{Carlen, Nelson0, Nelson1, Nelson, Nelson3}. See related work in \cite{CP}. Along with this framework,  Lafferty introduces the Riemannian manifold structure of density space. See \cite{Lafferty} or section 3 of \cite{Nelson3}. Nowadays this metric tensor is named $L^2$-Wasserstein metric, known in optimal transport communities \cite{Otto, vil2003, vil2008}. In classical approaches, the Hamiltonian flow in density space is induced by the vector field in sample space. It relies on the cotangent bundle (dual coordinates) of density manifold, which is often named Otto calculus \cite{vil2008}.   
In contrast to their work, our approach considers the other direction. We use the vector field in density space to describe the one in sample space. This approach applies the tangent bundle of density manifold \cite{li_geometry}. From this angle, we introduce the Lagrangian formalism of density manifold. 

The plan of paper is as follows. In section \ref{sec2}, we review the formulation of Hamiltonian flows with associated Christoffel symbol on Riemannian manifolds. In section \ref{sec3}, we derive the ones in density manifold. Several examples are demonstrated in section \ref{sec4}. 
\section{Hamiltonian flows on Riemannian manifolds}\label{sec2}
In this section, we briefly review classical Hamiltonian flows on a finite dimensional Riemannian manifold. It provides us the intuition to derive the ones in density space. 

Let $(M, g)$ be a smooth, compact, $d$-dimensional Riemannian manifold without boundaries. Here $g$ is the metric tensor of $M$. Given a smooth potential function $F\colon M\rightarrow \mathbb{R}$, a classical Hamiltonian flow in $(M,g)$ refers to the following second order differential equation
\begin{equation}\label{HF}
\ddot x+\Gamma(\dot x, \dot x)=-\textrm{grad}f(x),
\end{equation}
where $x=(x_i)_{i=1}^d$ is a local coordinate in $M$, $\dot x=\frac{dx(t)}{dt}$, $\Gamma(\dot x, \dot x)=\Big(\sum_{1\leq i,j\leq d}\Gamma_{ij}^k(x)\dot{x}_i\dot{x}_j\Big)_{k=1}^d$, $\Gamma_{ij}^k(x)$ is the Christoffel symbol, which is the coefficient of the quadratic term of $\dot x$, $f\colon M\rightarrow \mathbb{R}$ is a given potential function, and $\textrm{grad}$ is the Riemannian gradient operator. 

We next illustrate equation \eqref{HF} by using Hamilton's variational principle. We will explain what is the Lagrangian formalism of equation \eqref{HF}, what is its explicit formulation and why does it relate to Hamilton's equations. The Lagrangian is the function $L$ defined by 
 $$L(x,\dot x)=\frac{1}{2}{\dot x}^{\ts}g(x)\dot x-f(x).$$
In above, $L$ represents the kinetic energy minus the potential energy $f$. Here the metric tensor $g(x)\in \mathbb{R}^{d\times d}$ is introduced in kinetic energy. Consider a variational problem in $(M,g)$ by 
\begin{equation*}
I(x(t))=\inf_{x(t)}\big\{\int_0^TL(x,\dot x)dt\colon x(0)=x_0,~x(T)=x_T\big\}.
\end{equation*}
A path is critical for $L$ in case $I(x(t))$ is stationary for variations. It satisfies the Euler-Lagrange equation
\begin{equation}\label{ELF}
\frac{d}{dt}\frac{d}{d\dot x}L(x,\dot x)=\frac{d}{dx}L(x,\dot x).
\end{equation}

In fact, the trajectory of Hamiltonian flow is a critical path. In other words, equation \eqref{HF} can be derived by expressing \eqref{ELF} explicitly. Substituting $\frac{d}{d\dot x}L(x,\dot x)=g(x)\dot x$ into \eqref{ELF}, 
\begin{equation*}
\frac{d}{dt}(g(x)\dot x)=g(x)\ddot x+(d_xg_{ij}(x)\dot x)_{1\leq i,j\leq d}\dot x=\frac{1}{2} \dot x^{T}d_xg(x)\dot x-d_xf(x).
\end{equation*} 
By multiplying $g(x)^{-1}$ on both sides and collecting all the quadratic terms of $\dot x$ in above equation, then
\begin{equation*}
\ddot x+ g(x)^{-1} \Big((d_{x_k}g_{ij}(x)\dot x)_{1\leq i,j\leq d}\dot x-\frac{1}{2}\dot x^{T}d_{x_k}g(x)\dot x\Big)_{k=1}^n=-g(x)^{-1}d_xf(x).
\end{equation*}
Comparing the above equation with \eqref{HF}, the explicit formulation of geometric formulas are derived:
\begin{equation*}
\Gamma(\dot x, \dot x)=g(x)^{-1} \Big((d_{x_k}g_{ij}(x)\dot x)_{1\leq i,j\leq d}\dot x-\frac{1}{2}\dot x^{T}d_{x_k}g(x)\dot x\Big)_{k=1}^n,
\end{equation*}
 and 
 \begin{equation*}
  \textrm{grad}f=g(x)^{-1}d_xf(x),
 \end{equation*}
 where $d_x$ is the differential operator. 

Moreover, there is a Hamiltonian structure for each critical path. In other words, equation \eqref{HF} forms a first order ODE system, which is with the Hamiltonian vector field 
(a symplectic matrix times the differential of Hamiltonian). Consider the Legendre transformation 
\begin{equation*}
p=g(x)\dot x.
\end{equation*}
Here, $(x, \dot x)$ refers to the primal coordinates in the tangent bundle while $(x, p)$ represents the dual coordinates in the cotangent bundle.
The flow in primal coordinates can be recast as the first order ODE in dual coordinates. In other words,
\begin{equation*}
\begin{pmatrix}
\dot x\\
\dot p
\end{pmatrix}=\begin{pmatrix}
0 & \mathbb{I}\\
-\mathbb{I} & 0
\end{pmatrix}d_{x,p}H(x, p),
\end{equation*}
where $\begin{pmatrix}
0 & \mathbb{I}\\
-\mathbb{I} & 0
\end{pmatrix}$ is named the symplectic matrix, $d$ is the differential operator
and $H$ is the Hamiltonian function
 $$H(x,p)=\frac{1}{2}p^{\ts}g(x)^{-1}p+f(x)=\frac{1}{2}\dot x^{\ts}g(x)\dot x+f(x).$$ 
We note that $H$ is the summation of kinetic energy and potential energy. Based on above known facts, we introduce Hamiltonian flows in density manifold. 
\section{Hamiltonian flows on density manifold}\label{sec3}
In this section, we derive the Hamiltonian flow in density space with respect to the $L^2$-Wasserstein metric tensor.  

\subsection{$L^2$-Wasserstein metric tensor}
We first review some facts. Consider the space of positive smooth density functions supported on $M$.
\begin{equation*}
\mathcal{P}_+(M)=\{\rho d\textrm{vol}_M\colon \rho\in C^{\infty}(M),~\rho>0,~\int_M\rho d\textrm{vol}_M=1\}.
\end{equation*}
Denote the tangent space at $\rho \in \mathcal{P}_+(M)$ by 
\begin{equation*}
T_\rho \mathcal{P}_+({M})=\{\sigma\in C^{\infty}(M)\colon\int_{M}\sigma d\textrm{vol}_M=0\}.
\end{equation*}

The $L^2$-Wasserstein metric tensor is defined as follows. Denote the space of potential functions on $M$ by $\mathcal{F}(M)$. Consider the quotient space 
\begin{equation*}
\mathcal{F}(M)/ \mathbb{R}=\{[\Phi]\mid \Phi\in C^{\infty}(M)\},
\end{equation*}
where $[\Phi]=\{\Phi+c\mid c\in\mathbb{R}\}$ are functions defined up to addition of constants. 

The identification map is defined by
\begin{equation*}
\mathbf{V}\colon\mathcal{F}(M)/\mathbb{R} \rightarrow T_\rho\mathcal{P}_+(M),\quad\quad 
\mathbf{V}_\Phi=-\nabla\cdot(\rho\nabla\Phi).
\end{equation*} 
Since $M$ is a manifold without boundary, it is clear that $\int_M\mathbf{V}_\Phi d\textrm{vol}_M=0$. The property of elliptical operator $$\Delta_\rho=\nabla\cdot(\rho\nabla)$$ shows that $\mathbf{V}_\Phi\colon \mathcal{F}(M)/\mathbb{R}\rightarrow T_\rho\mathcal{P}_+(M)$ is a well defined map,
linear, and one to one. In other words, $\mathcal{F}(M)/\mathbb{R}\cong T_\rho^*\mathcal{P}_+(M)$, where $T_\rho^*\mathcal{P}_+(M)$ is the smooth cotangent space of $\mathcal{P}_+(M)$. 

The identification induces the following inner product on 
$T_\rho\mathcal{P}_+(M)$. We first present this metric tensor in a \emph{dual} formulation \cite{Lott}. 
\begin{definition}[Inner product in dual coordinates]\label{d9}
The inner product 
$g_W :T_\rho\mathcal{P}_+(M)\times T_\rho\mathcal{P}_+(M)  \rightarrow \mathbb{R}$ 
takes any two tangent vectors $\sigma_1=\mathbf{V}_{\Phi_1}$ and $\sigma_2=\mathbf{V}_{\Phi_2}\in T_\rho\mathcal{P}_+(M)$ to 
\begin{equation*}\begin{split}\label{formula}
g_W(\sigma_1, \sigma_2)=\int_M\sigma_1\Phi_2d\textrm{vol}_M=\int_M\sigma_2^{}\Phi_1d\textrm{vol}_M=\int_M(\nabla\Phi_1, \nabla\Phi_2)\rho d\textrm{vol}_M.
\end{split} 
\end{equation*}
\end{definition}
Define $(-\Delta_\rho)^{\dd}\colon T_\rho\mathcal{P}_+(M)\rightarrow  T_\rho\mathcal{P}_+(M)$ the pseudo inverse operator of $(-\Delta_\rho)$. One simply check the fact that
\begin{equation*}
(-\Delta_\rho)^{\dd}(-\Delta_\rho)(-\Delta_\rho)^{\dd}=(-\Delta_{\rho})^{\dd}.
\end{equation*}
Thus 
\begin{equation*}
\begin{split}
\int_{M}(\nabla\Phi_1, \nabla\Phi_2)\rho d\textrm{vol}_M=&\int_M \Phi_1(-\Delta_\rho)\Phi_2d\textrm{vol}_M\\
=&\int_M \mathbf{V}_{\Phi_1} (-\Delta_\rho)^{\dd}(-\Delta_\rho)(-\Delta_\rho)^{\dd}\mathbf{V}_{\Phi_2} d\textrm{vol}_M\\
=&\int_M \sigma_1 (-\Delta_\rho)^{\dd}\sigma_2d\textrm{vol}_M.
\end{split}
\end{equation*}

Based on above understandings, we next present the metric tensor in {\em primal} coordinates. 
\begin{definition}[Inner product in primal coordinates]
Given $\sigma_1,\sigma_2\in T_\rho\mathcal{P}_+(M)$, the inner product $g_{W}(\cdot, \cdot):T_\rho\mathcal{P}_+(M)\times T_\rho\mathcal{P}_+(M)  \rightarrow \mathbb{R}$ is defined by
\begin{equation*}
g_{W}(\sigma_1,\sigma_2)=\int_{M}{\sigma_1} (-\Delta_\rho)^{\dagger}\sigma_2d\textrm{vol}_M.
\end{equation*}
\end{definition}
Following~\cite{Lafferty}, $(\mathcal{P}_+(M), g_W)$ is named density manifold. The variational problem from inner product gives a minimization of geometry energy functional in $\mathcal{P}_+(M)$.
\begin{equation*}
\begin{split}
E(\rho_t)=&\inf_{\rho_t\in \mathcal{P}_+(M)}\Big\{\int_0^1 \int_M \partial_t\rho_t(-\Delta_{\rho_t})^{\dd}\partial_t\rho_t d\textrm{vol}_Mdt~\colon~ \rho_0= \rho^0,~ \rho_1= \rho^1\Big\}\\
=&\inf_{\rho_t\in \mathcal{P}_+(M)}\Big\{\int_0^1\int_M(\nabla\Phi_t, \nabla\Phi_t)\rho_td\textrm{vol}_Mdt~\colon~\partial_t\rho_t+\nabla\cdot(\rho_t\nabla\Phi_t)=0,~ \rho_0= \rho^0,~ \rho_1= \rho^1\Big\}.
\end{split}
\end{equation*}
 The energy function equals the squared of geodesic distance, known as $L^2$-Wasserstein distance. In this case, the inverse Laplacian operator $(-\Delta_\rho)^{\dd}$ introduces the Legendre transformation in density manifold$$\Phi_t=(-\Delta_{\rho_t})^{\dd}\partial_t\rho_t.$$ 
As in previous section, $(\rho_t, \partial_t\rho_t)$ represents the primal coordinates in tangent bundle while $(\rho_t, \Phi_t)$ refers the dual coordinates in cotangent bundle. 

We note that the $L^2$-Wasserstein metric has many other equivalent formulations, including optimal mapping formulation, named Monge problem, and the statical formulation, called Kantorovich problem. For more details see \cite{vil2003}. In this paper, we focus on its induced metric tensor in primal coordinates. 
\subsection{Wasserstein Hamiltonian flows}
We next present the Hamiltonian flows in density manifold. We shall introduce the following second order partial differential equation \begin{equation}\label{SS}
\partial_{tt}\rho_t+\Gamma_W(\partial_t\rho_t, \partial_t\rho_t)=-\textrm{grad}_W\mathcal{F}(\rho_t),
\end{equation}
where $\Gamma_W$ is the Christopher symbol, representing the quadratic function of $\partial_t\rho_t$,  and $\textrm{grad}_W$ is the Riemannian gradient operator in $(\mathcal{P}_+(M), g_W)$. The above equation has been derived by a geometric approach in \cite{li_geometry}. In this paper, we would like to proceed with the other derivation based on Hamilton's variational principle.

Let $\mathcal{F}\colon \mathcal{P}_+(M)\rightarrow \mathbb{R}$ be a smooth potential energy. The Lagrangian in density manifold is given by
\begin{equation*}
\mathcal{L}(\rho_t, \partial_t\rho_t)=\frac{1}{2}g_W(\partial_t\rho_t, \partial_t\rho_t)-\mathcal{F}(\rho_t).
\end{equation*}
In above formula, $\mathcal{L}$ represents the kinetic energy minus potential energy in density manifold. It can be viewed as the ``expectation'' of Lagrangian in $M$ based on current probability density. Here the path $x(t)$ in $M$ is represented by the corresponding density path $\rho_t$.  

Consider the variational problem
\begin{equation}\label{Lag}
I(\rho_t)=\inf_{\rho_t}\Big\{\int_0^T\mathcal{L}(\rho_t, \partial_t\rho_t)dt\colon \rho_0=\rho^0,~\rho_T=\rho^T\Big\}.
\end{equation}
A density path is critical for $\mathcal{L}$ in case $I(\rho_t)$ is stationary for variations. We next derive Hamiltonian flows by finding critical paths of \eqref{Lag}.  
\begin{theorem}[Hamiltonian flow in primal coordinates]\label{ELD}
The Euler-Lagrange equation of variational problem \eqref{Lag} satisfies
\begin{equation}\label{EL}
{\partial_t}\frac{\delta}{\delta\partial_t\rho_t}\mathcal{L}(\rho_t, \partial_t\rho_t)=\frac{\delta}{\delta\rho_t}\mathcal{L}(\rho_t, \partial_t\rho_t)+C(t),
\end{equation}
where $\frac{\delta}{\delta\rho_t}$, $\frac{\delta}{\delta\partial_t\rho_t}$ is the $L^2$ first variation w.r.t. $\rho_t$, $\partial_t\rho_t$ respectively, and $C(t)$ is a spatially-constant function. More explicitly, the Euler-Lagrange equation can be rewritten as
\begin{equation}\label{geo}
\partial_{tt}\rho_t-\Big(\Delta_{\partial_t\rho_t}\Delta_{\rho_t}^{\dagger}\partial_t\rho_t+\frac{1}{2}\Delta_{\rho_t}(\nabla \Delta_{\rho_t}^{\dagger}\partial_t\rho_t)^2\Big)=\nabla\cdot(\rho_t\nabla\frac{\delta}{\delta\rho_t}\mathcal{F}(\rho_t)).
\end{equation}
\end{theorem}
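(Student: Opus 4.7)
The plan is to port the Hamilton variational argument reviewed in Section~\ref{sec2} directly to the infinite-dimensional setting, paying attention to two features absent in the finite-dimensional case: the normalization constraint $\int_M \rho_t\, d\textrm{vol}_M = 1$, and the fact that the kinetic term $\tfrac{1}{2}g_W(\partial_t\rho_t,\partial_t\rho_t) = \tfrac{1}{2}\int \partial_t\rho_t\,(-\Delta_{\rho_t})^{\dagger}\partial_t\rho_t\, d\textrm{vol}_M$ depends on $\rho_t$ through the pseudoinverse $(-\Delta_{\rho_t})^{\dagger}$.

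First I would carry out a standard first-variation computation on $I(\rho_t)$: perturb $\rho_t \mapsto \rho_t + \epsilon \eta_t$ with $\eta_0 = \eta_T = 0$ and $\int_M \eta_t\, d\textrm{vol}_M = 0$ so the perturbed curve stays in $\mathcal{P}_+(M)$, set the derivative at $\epsilon = 0$ to zero, and integrate by parts in $t$. Because the admissible $\eta_t$ are spatially mean-zero, the resulting pointwise identity is determined only up to an additive function of $t$, and that arbitrary function is exactly the $C(t)$ in~\eqref{EL}; it plays the role of a Lagrange multiplier for the mass constraint.

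The main calculation is then the two $L^2$ variations. The derivative $\tfrac{\delta\mathcal{L}}{\delta\partial_t\rho_t}$ is immediate from symmetry of $(-\Delta_{\rho_t})^{\dagger}$, yielding the dual potential $(-\Delta_{\rho_t})^{\dagger}\partial_t\rho_t = -\Psi_t$, where for convenience I write $\Psi_t := \Delta_{\rho_t}^{\dagger}\partial_t\rho_t$ so that the continuity equation reads $\partial_t\rho_t = \Delta_{\rho_t}\Psi_t$. The hard part, and the main obstacle of the whole argument, is $\tfrac{\delta\mathcal{L}}{\delta\rho_t}$, because one must differentiate through the inverse operator while holding $\partial_t\rho_t$ fixed. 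The cleanest route is implicit differentiation of the defining PDE $-\nabla\cdot(\rho\nabla\Phi) = \partial_t\rho$: the linear response $\delta\Phi$ under $\rho\mapsto\rho+\epsilon\eta$ satisfies $\nabla\cdot(\rho\nabla\delta\Phi) = -\nabla\cdot(\eta\nabla\Phi)$, and two integrations by parts in the Benamou--Brenier representation $\tfrac{1}{2}\int \rho|\nabla\Phi|^2$ collapse the cross term to $-\int \eta|\nabla\Phi|^2$. Combining with the direct $\rho$-dependence of the integrand gives $\tfrac{\delta}{\delta\rho_t}\bigl[\tfrac{1}{2}g_W(\partial_t\rho_t,\partial_t\rho_t)\bigr] = -\tfrac{1}{2}|\nabla\Psi_t|^2$, and hence $\tfrac{\delta\mathcal{L}}{\delta\rho_t} = -\tfrac{1}{2}|\nabla\Psi_t|^2 - \tfrac{\delta\mathcal{F}}{\delta\rho_t}$. (An envelope argument applied to the Benamou--Brenier supremum delivers the same conclusion with less bookkeeping, and would be my fallback if the implicit differentiation looked unwieldy.)

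Substituting into~\eqref{EL} gives the Hamilton--Jacobi-type relation
\begin{equation*}
\partial_t\Psi_t \;=\; \tfrac{1}{2}|\nabla\Psi_t|^2 + \tfrac{\delta\mathcal{F}}{\delta\rho_t} - C(t).
\end{equation*}
To pass to the primal form~\eqref{geo}, I would differentiate the continuity equation $\partial_t\rho_t = \nabla\cdot(\rho_t\nabla\Psi_t)$ once more in time, obtaining $\partial_{tt}\rho_t = \Delta_{\partial_t\rho_t}\Psi_t + \Delta_{\rho_t}\partial_t\Psi_t$, then substitute the Hamilton--Jacobi expression for $\partial_t\Psi_t$. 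The spatial constant $C(t)$ is annihilated by the outer $\Delta_{\rho_t} = \nabla\cdot(\rho_t\nabla\,\cdot\,)$, and the remaining pieces are exactly $\Delta_{\partial_t\rho_t}\Delta_{\rho_t}^{\dagger}\partial_t\rho_t$, $\tfrac{1}{2}\Delta_{\rho_t}(\nabla\Delta_{\rho_t}^{\dagger}\partial_t\rho_t)^2$, and the forcing $\nabla\cdot(\rho_t\nabla\tfrac{\delta\mathcal{F}}{\delta\rho_t})$, reproducing~\eqref{geo}. Beyond the pseudoinverse variation, everything is routine manipulation of the continuity/Hamilton--Jacobi pair.
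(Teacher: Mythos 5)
Your proposal is correct, and it follows the same overall variational skeleton as the paper (perturb by a mean-zero $h_t$ vanishing at the endpoints, integrate by parts in $t$, read off \eqref{EL} with the additive $C(t)$ coming from the mass constraint). Where you genuinely diverge is in the technical core. The paper's proof rests on a single operator identity, $\partial_t(-\Delta_{\rho_t})^{\dd}\sigma=-(-\Delta_{\rho_t})^{\dd}(-\Delta_{\partial_t\rho_t})(-\Delta_{\rho_t})^{\dd}\sigma$, proved by extending the pseudoinverse to an invertible operator $g(\rho)$ and differentiating $g(\rho)^{-1}g(\rho)=\mathrm{id}$; this Claim is used twice, once to compute $\partial_t\bigl((-\Delta_{\rho_t})^{\dd}\partial_t\rho_t\bigr)$ (the identity \eqref{aa}) and once to compute the $\rho$-variation of the kinetic energy (the identity \eqref{bb}), after which $\Delta_{\rho_t}$ is applied to the whole equation. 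You bypass the pseudoinverse calculus entirely: you get \eqref{bb} by implicit differentiation of the defining elliptic equation $-\nabla\cdot(\rho\nabla\Phi)=\partial_t\rho$ plus two integrations by parts in $\tfrac12\int\rho|\nabla\Phi|^2$ (the same computation, phrased at the level of potentials rather than operators), and you replace \eqref{aa} by differentiating the continuity equation $\partial_t\rho_t=\Delta_{\rho_t}\Psi_t$ in time and substituting the resulting Hamilton--Jacobi relation. In effect you prove the dual-coordinate statement of Proposition \ref{leg} first and then pass back to the primal equation \eqref{geo}, which is the reverse of the paper's order. Your route is somewhat more elementary and exposes the Hamiltonian structure mid-proof; the paper's route has the advantage that the operator identity for $\partial_t(-\Delta_{\rho_t})^{\dd}$ is established once and reused later (it reappears in the proof of Proposition \ref{pt}), and that the entire derivation stays in the primal coordinates $(\rho_t,\partial_t\rho_t)$ in which the theorem is stated.
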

\begin{remark}
By comparing \eqref{geo} with \eqref{SS}, we note that 
 $$\Gamma_W(\partial_t\rho_t, \partial_t\rho_t)=-\Big(\Delta_{\partial_t\rho_t}\Delta_{\rho_t}^{\dagger}\partial_t\rho_t+\frac{1}{2}\Delta_{\rho_t}(\nabla \Delta_{\rho_t}^{\dagger}\partial_t\rho_t)^2\Big),$$ while $$\textrm{grad}_W\mathcal{F}(\rho_t)=-\nabla\cdot(\rho_t\nabla\frac{\delta}{\delta\rho_t}\mathcal{F}(\rho_t)).$$
 \end{remark}
\begin{proof}
Denote a smooth perturbation function $h_t=h(t,\cdot)$, such that $\int_{M}h_td\textrm{vol}_M=0$ for all $t\in [0, T]$ and $h(0,\cdot)=h(T,\cdot)=0$. Denote $\rho^\epsilon_t=\rho^\epsilon(t,\cdot)=\rho_t+\epsilon h_t$, and consider the Taylor expansion of $I(\rho^\epsilon_t)$ w.r.t. $\epsilon$, 
\begin{equation*}
I(\rho^\epsilon_t)=I(\rho_t)+\epsilon\frac{d}{d\epsilon}I(\rho^\epsilon_t)|_{\epsilon=0} +o(\epsilon).
\end{equation*}
Notice that 
\begin{equation*}
\begin{split}
I(\rho^\epsilon_t)=&\int_0^T\mathcal{L}(\rho_t+\epsilon h_t, \partial_t\rho_t+\epsilon \partial_th_t)dt\\
=&\int_0^T\mathcal{L}(\rho_t, \partial_t\rho_t)dt+\epsilon\int_0^T\int_M\big(\frac{\delta}{\delta\rho_t}\mathcal{L}(\rho_t,\partial_t\rho_t)\cdot h_t+\frac{\delta}{\delta\partial_t\rho_t}\mathcal{L}(\rho_t,\partial_t\rho_t)\cdot\partial_t h_t\big) d\textrm{vol}_Mdt+o(\epsilon).
\end{split}
\end{equation*}
It is clear that $\frac{d}{d\epsilon}I(\rho^\epsilon_t)|_{\epsilon=0}=0$ implies 
\begin{equation*}
\int_0^T\int_M\big(\frac{\delta}{\delta\rho_t}\mathcal{L}(\rho_t,\partial_t\rho_t)\cdot h_t+\frac{\delta}{\delta\partial_t\rho_t}\mathcal{L}(\rho_t,\partial\rho_t)\cdot\partial_t h_t\big) d\textrm{vol}_Mdt=0.
\end{equation*}
Perform integration by parts w.r.t. $t$ in above formula and notice $h(0,x)=h(T, x)=0$. Then
\begin{equation*}
\int_0^T\int_M\big(\frac{\delta}{\delta\rho_t}\mathcal{L}(\rho_t,\partial_t\rho_t)-\partial_t\frac{\delta}{\delta\partial_t\rho_t}\mathcal{L}(\rho_t,\partial\rho_t)\big)h_t d\textrm{vol}_Mdt=0.
\end{equation*}
Since $\int_Mh_td\textrm{vol}_M=0$, then the equation \eqref{EL} holds up to a spatially-constant function shrift. 

We next derive \eqref{geo} by expressing \eqref{EL} explicitly. In other words, notice $\frac{\delta}{\delta\partial_t\rho_t}\mathcal{L}=(-\Delta_{\rho_t})^{\dd}\partial_t\rho_t$, then \eqref{EL} forms
\begin{equation*}\label{EL1}
\partial_t \big((-\Delta_{\rho_t})^{\dd}\partial_t\rho_t\big)=\frac{\delta}{\delta\rho_t}\big(\frac{1}{2}\int_M\partial_t\rho_t(-\Delta_{\rho_t})^{\dd}\partial_t\rho_t d\textrm{vol}_M -\mathcal{F}(\rho_t)\big)+C(t).
\end{equation*}
Towards the above equation, we shall show that its L.H.S. satisfies
\begin{equation}\label{aa}
\partial_t \big((-\Delta_{\rho_t})^{\dd}\partial_t\rho_t\big)=(-\Delta_{\rho_t})^{\dd}\partial_{tt}\rho_t-(-\Delta_{\rho_t})^{\dd}(-\Delta_{\partial_t\rho_t})(-\Delta_{\rho_t})^{\dd}\partial_t\rho_t,
\end{equation}
while the R.H.S. satisfies 
\begin{equation}\label{bb}
\frac{\delta}{\delta\rho_t}\big(\frac{1}{2}\int_M\partial_t\rho_t(-\Delta_{\rho_t})^{\dd}\partial_t\rho_t d\textrm{vol}_M -\mathcal{F}(\rho_t)\big)=-\frac{1}{2}(\nabla\Delta_{\rho_t}^{\dd}\partial_t\rho_t)^2-\frac{\delta}{\delta\rho_t}\mathcal{F}(\rho_t).
\end{equation}
Combining \eqref{aa} and \eqref{bb}, multiplying $\Delta_{\rho_t}$ on both sides and collecting all quadratic term of $\partial_t\rho_t$, we prove the result.

We next prove \eqref{aa} and \eqref{bb} by the following claim. 

\noindent\textbf{Claim:}
For any $\sigma\in T_\rho\mathcal{P}_+(M)$, then
\begin{equation*}
\Big(\partial_t(-\Delta_{\rho_t})^{\dd}\Big)\sigma=-(-\Delta_{\rho_t})^{\dd}(-\Delta_{\partial_t\rho_t})(-\Delta_{\rho_t})^{\dd}\sigma.
\end{equation*}
\begin{proof}[Proof of Claim]
Given $t\in[0, T]$, denote $\rho=\rho_t\in\mathcal{P}_+(M)$. Since $(-\Delta_{\rho})^{\dd}$ is semi-positive, we construct a positive self-adjoint operator $g(\rho)\colon C^\infty(M)\rightarrow C^\infty(M)$ to compute its derivative. Define 
 \begin{equation*}
g(\rho)f=\big(-\Delta_\rho\big)^{\dd}(f-\int_{M}fd\textrm{vol}_M)+\int_{M}fd\textrm{vol}_M, \quad \textrm{for $f\in C^{\infty}(M)$}.
\end{equation*}
We shall simply check that the inverse operator of $g(\rho)$ satisfies 
\begin{equation*}
g(\rho)^{-1}f=(-\Delta_\rho)f+\int_Mfd\textrm{vol}_M.
\end{equation*}
Notice
 \begin{equation*}
 \begin{split}
g(\rho)^{-1}g(\rho)f=&(-\Delta_\rho)\Big(\big(-\Delta_\rho\big)^{\dd}(f-\int_{M}fd\textrm{vol}_M)+\int_{M}fd\textrm{vol}_M\Big)+\int_{M}fd\textrm{vol}_M\\
=&f-\int_Mfd\textrm{vol}_M+(-\Delta_\rho\int_Mfd\textrm{vol}_M)+\int_Mfd\textrm{vol}_M\\
=&f-\int_Mfd\textrm{vol}_M+\int_Mfd\textrm{vol}_M=f.
\end{split}
\end{equation*}
Since $g(\rho)$ is a linear operator, then
 \begin{equation*}
\begin{split}
0=&\partial_tf=\partial_t(g(\rho_t)^{-1}g(\rho_t)f)\\
=&\partial_tg(\rho_t)^{-1} g(\rho_t)f+g(\rho_t)^{-1}\partial_tg(\rho_t)f.
\end{split}
\end{equation*}
Thus $\partial_tg(\rho_t)f=-g(\rho_t)\partial_tg(\rho_t)^{-1}g(\rho_t)f$. If $f=\sigma\in T_\rho\mathcal{P}_+(M)$, i.e. $\int_{M}\sigma d\textrm{vol}_M=0$, then $g(\rho)\sigma=(-\Delta_\rho)^{\dd}\sigma$. 
Thus 
\begin{equation*}\label{a}
\begin{split}
\partial_tg(\rho_t)\sigma=&-g(\rho_t)\partial_tg(\rho_t)^{-1}g(\rho_t)\sigma\\
=&-g(\rho_t)\partial_t(-\Delta_{\rho_t})g(\rho_t)\sigma\\ 
=&-(-\Delta_{\rho_t})^{\dd}(-\Delta_{\partial_t\rho_t})(-\Delta_{\rho_t})^{\dd}\sigma,
\end{split}
\end{equation*}
where the last equality is true since $\Delta_{\rho_t}=\nabla\cdot(\rho_t\nabla)$ is linear w.r.t. $\rho_t$. 
\end{proof}
We demonstrate \eqref{aa}. From the claim,
\begin{equation*}
\begin{split}
\partial_t \big((-\Delta_{\rho_t})^{\dd}\partial_t\rho_t\big)=&(-\Delta_{\rho_t})^{\dd}\partial_{t}(\partial_t\rho_t)+  \Big(\partial_t(-\Delta_{\rho_t})^{\dd}\Big)\partial_t\rho_t \\
=&(-\Delta_{\rho_t})^{\dd}\partial_{tt}\rho_t-(-\Delta_{\rho_t})^{\dd}(-\Delta_{\partial_t\rho_t})(-\Delta_{\rho_t})^{\dd}\partial_t\rho_t.
\end{split}
\end{equation*}
We show \eqref{bb}. Consider a perturbation function $h\in C^{\infty}(M)$, then  
\begin{equation*}
\begin{split}
&\frac{d}{d\epsilon}\int_M\partial_t\rho_t(-\Delta_{(\rho_t+\epsilon h)})^{\dd}\partial_t\rho_t d\textrm{vol}_M|_{\epsilon=0}\\
=&\int_M\partial_t\rho_t\big(-(-\Delta_{\rho_t+\epsilon h}^{\dd})(-\Delta_{h}) (-\Delta_{\rho_t+\epsilon h})^{\dd}\big) \partial_t\rho_t d\textrm{vol}_M|_{\epsilon=0}\\
=&\int_M\partial_t\rho_t\big(-(-\Delta_{\rho_t}^{\dd})(-\Delta_{h}) (-\Delta_{\rho_t})^{\dd}\big) \partial_t\rho_t d\textrm{vol}_M\\
=&\int_M(\Delta_{\rho_t}^{\dd}\partial_t\rho_t) (\Delta_h)(\Delta_{\rho_t}^{\dd}\partial_t\rho_t)d\textrm{vol}_M=\int_M (\Delta_{\rho_t}^{\dd}\partial_t\rho_t) \nabla\cdot (h\nabla\Delta_{\rho_t}^{\dd}\partial_t\rho_t)d\textrm{vol}_M\\
=&-\int_M (\nabla\Delta_{\rho_t}^{\dd}\partial_t\rho_t)^2h d\textrm{vol}_M,
\end{split}
\end{equation*}
where the first equality is shown by the claim. 
From the definition of $L^2$ first variation, \eqref{bb} is proved. 
\end{proof}

Secondly, we demonstrate that Euler-Lagrange \eqref{geo} can be recast into Hamilton's equations. 
\begin{proposition}[Hamiltonian flow in dual coordinates]\label{leg}
Consider 
\begin{equation*}
\Phi_t=\big(-\Delta_{\rho_t}\big)^{\dd}\partial_t\rho_t,
\end{equation*}
then equation \eqref{geo} can be formulated as the first order system of $(\rho_t, \Phi_t)$, 
\begin{equation*}
\begin{cases}
&\partial_t\rho_t+\nabla\cdot(\rho_t\nabla\Phi_t)=0\\
&\partial_t\Phi_t+\frac{1}{2}(\nabla\Phi_t)^2=-\frac{\delta}{\delta\rho_t}\mathcal{F}(\rho_t)
\end{cases},
\end{equation*}
where $\Phi_t$ is up to a spatially-constant function shrift. In other words, 
\begin{equation*}
\partial_t\rho_t=\frac{\delta}{\delta\Phi_t}\mathcal{H}(\rho_t, \Phi_t),\quad \partial_t\Phi_t=-\frac{\delta}{\delta\rho_t}\mathcal{H}(\rho_t, \Phi_t),
\end{equation*}
where the Hamiltonian is given by 
\begin{equation*}
\mathcal{H}(\rho_t, \Phi_t)=\int_{M}\frac{1}{2}(\nabla\Phi_t)^2\rho_td\textrm{vol}_M+\mathcal{F}(\rho_t).
\end{equation*}
\end{proposition}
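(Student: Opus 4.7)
The plan is to unpack the Legendre transformation $\Phi_t=(-\Delta_{\rho_t})^{\dd}\partial_t\rho_t$ at both coordinates of the Euler-Lagrange equation \eqref{EL} established in Theorem \ref{ELD} and read off Hamilton's equations directly. The first equation (continuity equation) should fall out immediately from the definition of $\Phi_t$: applying $(-\Delta_{\rho_t})$ to both sides and using $(-\Delta_{\rho_t})(-\Delta_{\rho_t})^{\dd}\partial_t\rho_t=\partial_t\rho_t$ (valid since $\partial_t\rho_t\in T_{\rho_t}\mathcal{P}_+(M)$ lies in the range of $-\Delta_{\rho_t}$), which gives $\partial_t\rho_t=-\nabla\cdot(\rho_t\nabla\Phi_t)$.

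For the second equation, I would revisit the proof of Theorem \ref{ELD} rather than \eqref{geo} itself. The key is that the L.H.S. of \eqref{EL} is $\partial_t\frac{\delta\mathcal{L}}{\delta\partial_t\rho_t}=\partial_t\Phi_t$, while the R.H.S. is computed in \eqref{bb}. Using $\Phi_t=(-\Delta_{\rho_t})^{\dd}\partial_t\rho_t$ (so $(\nabla\Delta_{\rho_t}^{\dd}\partial_t\rho_t)^2=(\nabla\Phi_t)^2$ regardless of sign convention), \eqref{EL} becomes
\begin{equation*}
\partial_t\Phi_t=-\tfrac{1}{2}(\nabla\Phi_t)^2-\tfrac{\delta}{\delta\rho_t}\mathcal{F}(\rho_t)+C(t),
\end{equation*}
which is exactly the Hamilton-Jacobi equation of the proposition after absorbing $C(t)$ into the spatially-constant indeterminacy of $\Phi_t$ (which is consistent with the quotient identification $\mathcal{F}(M)/\mathbb{R}\cong T^*_\rho\mathcal{P}_+(M)$).

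The last step is to verify the Hamiltonian structure by computing the two $L^2$ first variations of $\mathcal{H}(\rho_t,\Phi_t)=\int_M\tfrac{1}{2}(\nabla\Phi_t)^2\rho_t\,d\textrm{vol}_M+\mathcal{F}(\rho_t)$: the variation w.r.t.\ $\Phi_t$ is a one-line integration by parts yielding $\frac{\delta\mathcal{H}}{\delta\Phi_t}=-\nabla\cdot(\rho_t\nabla\Phi_t)$, which matches the continuity equation and gives $\partial_t\rho_t=\frac{\delta\mathcal{H}}{\delta\Phi_t}$; the variation w.r.t.\ $\rho_t$ is immediate and gives $\frac{\delta\mathcal{H}}{\delta\rho_t}=\tfrac{1}{2}(\nabla\Phi_t)^2+\frac{\delta\mathcal{F}}{\delta\rho_t}$, which matches $-\partial_t\Phi_t$.

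The only real subtlety is the bookkeeping of the spatially-constant function $C(t)$: it must be tracked carefully and interpreted as freedom in the cotangent representative $\Phi_t$, since otherwise the identification of $\partial_t\Phi_t$ with the R.H.S.\ of \eqref{EL} would be ambiguous. This is the step most at risk of a conceptual slip, but once framed through the quotient $\mathcal{F}(M)/\mathbb{R}$ the argument is routine. No new hard estimates or regularity issues arise beyond those already handled in Theorem \ref{ELD}.
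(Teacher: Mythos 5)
Your proposal is correct, but it reaches the Hamilton--Jacobi equation by a different computational path than the paper. The paper starts from the fully explicit second-order equation \eqref{geo}, substitutes $\partial_t\rho_t=-\Delta_{\rho_t}\Phi_t$, regroups all terms as $-\Delta_{\rho_t}\bigl(\partial_t\Phi_t+\tfrac{1}{2}(\nabla\Phi_t)^2+\tfrac{\delta}{\delta\rho_t}\mathcal{F}(\rho_t)\bigr)=0$, and then invokes the kernel of the elliptic operator $-\Delta_{\rho_t}$ to conclude that the bracket is spatially constant. You instead go back one step to the abstract Euler--Lagrange equation \eqref{EL}: since $\frac{\delta}{\delta\partial_t\rho_t}\mathcal{L}=(-\Delta_{\rho_t})^{\dd}\partial_t\rho_t=\Phi_t$, its left-hand side is literally $\partial_t\Phi_t$, and identity \eqref{bb} gives the right-hand side as $-\tfrac{1}{2}(\nabla\Phi_t)^2-\tfrac{\delta}{\delta\rho_t}\mathcal{F}(\rho_t)+C(t)$, so the Hamilton--Jacobi equation appears with no need to apply and then factor out $\Delta_{\rho_t}$. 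What your route buys is the avoidance of the ellipticity/kernel argument (the constant $C(t)$ is already present in \eqref{EL} and is absorbed into the cotangent representative exactly as you say); what it costs is that you prove the system equivalent to \eqref{EL} rather than to \eqref{geo} as literally stated in the proposition, so you must lean on Theorem \ref{ELD}'s equivalence of \eqref{EL} and \eqref{geo} (and on the intermediate identities \eqref{aa}--\eqref{bb} from its proof) to close the loop. Your sign remark that $(\nabla\Delta_{\rho_t}^{\dd}\partial_t\rho_t)^2=(\nabla\Phi_t)^2$ is right, and your closing verification of $\frac{\delta\mathcal{H}}{\delta\Phi_t}=-\nabla\cdot(\rho_t\nabla\Phi_t)$ and $\frac{\delta\mathcal{H}}{\delta\rho_t}=\tfrac{1}{2}(\nabla\Phi_t)^2+\frac{\delta\mathcal{F}}{\delta\rho_t}$ supplies a detail the paper leaves implicit. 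No gap.
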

\begin{proof}
We directly check the result. Since $\Phi_t=(-\Delta_{\rho_t})^{\dd}\partial_t\rho_t$, then 
the continuity equation holds, i.e. $\partial_t\rho_t+\Delta_{\rho_t}\Phi_t=\partial_t\rho_t+\nabla\cdot(\rho_t\nabla\Phi_t)=0$. We only need to show that $\Phi_t$ up to a spatially-constant function shrift satisfies the Hamilton-Jacobi equation. We rewrite \eqref{geo} by
\begin{equation*}
\begin{split}
0=&\partial_t(\partial_t\rho_t)-\Delta_{\partial_t\rho_t}\Delta_{\rho_t}^{\dagger}\partial_t\rho_t-\frac{1}{2}\Delta_{\rho_t}(\nabla \Delta_{\rho_t}^{\dagger}\partial_t\rho_t)^2-\Delta_{\rho_t}\frac{\delta}{\delta_{\rho_t}}\mathcal{F}(\rho_t)\\
=&-\partial_t(\Delta_{\rho_t}\Phi_t)+\Delta_{\partial_t\rho_t}\Phi_t-\frac{1}{2}\Delta_{\rho_t}(\nabla\Phi_t)^2-\Delta_{\rho_t}\frac{\delta}{\delta\rho_t}\mathcal{F}(\rho_t)\\
=&-\Delta_{\rho_t}\Big(\partial_t\Phi_t+\frac{1}{2}(\nabla\Phi_t)^2+\frac{\delta}{\delta\rho_t}\mathcal{F}(\rho_t)\Big).
\end{split}
\end{equation*}
Based on the property of elliptical operator $(-\Delta_{\rho_t})$, $\Phi_t$ up to a spatially-constant function shrift satisfies the Hamilton-Jacobi equation. 
\end{proof}

In the last, we demonstrate a natural mathematical connection between Hamiltonian flows in $(\mathcal{P}_+(M), g_{W})$ and $(M, g)$. We shall show that the density transition equation of a second order ODE \eqref{HF} satisfies a second order PDE \eqref{geo}. For illustration, let $(M, g)$ be a $d$ dimensional torus $(\mathbb{T}^d, \mathbb{I})$.
\begin{proposition}[Hamiltonian flow as density transition equation]\label{pt}
Let $(X_t)_{0\leq t<T}$ be a smooth diffeomorphism in $\mathbb{T}^d$ with $X_0=\textrm{Id}$, $\dot X_0=\nabla\Phi$ for some smooth function $\Phi(x)$. Suppose $X_t$ satisfies 
\begin{equation}\label{gd}
\frac{d^2}{dt^2} X_t=-\nabla_{X_t}\frac{\delta}{\delta\rho(t,X_t)}\mathcal{F}(\rho_t).
\end{equation}
Given the initial density $\mu\in\mathcal{P}_+(\mathbb{T}^d)$, $\rho_t=X_t\#\mu$, i.e. $\rho_t$ equals $X_t$ push-forward $\mu$. Then the density path $\rho_t=\rho(t,\cdot)$ is a solution of \eqref{geo}. 
\end{proposition}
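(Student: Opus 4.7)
The plan is to convert the Lagrangian ODE for $X_t$ into the Hamiltonian PDE system of Proposition \ref{leg}, and then invoke that proposition to conclude $\rho_t$ solves \eqref{geo}. The bridge between the two pictures is the Eulerian velocity field $v_t\colon \mathbb{T}^d\to\mathbb{R}^d$ defined by $v_t(y)=\dot X_t(X_t^{-1}(y))$, i.e.\ $\dot X_t=v_t\circ X_t$. From $\rho_t=X_t\#\mu$ and a standard change-of-variables calculation (differentiating $\int\varphi\,\rho_t\,d\mathrm{vol}=\int\varphi\circ X_t\,d\mu$ in $t$ against a test function $\varphi$), the first Hamilton equation
\[
\partial_t\rho_t+\nabla\cdot(\rho_t v_t)=0
\]
follows for free. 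So the first order of business is to turn the second-order ODE \eqref{gd} into an Eulerian momentum equation for $v_t$ and then recognize it as a Hamilton--Jacobi equation for some potential $\Phi_t$.

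For the momentum equation, I would differentiate $\dot X_t=v_t\circ X_t$ in $t$ using the chain rule to obtain $\ddot X_t=\bigl(\partial_t v_t+(v_t\cdot\nabla)v_t\bigr)\circ X_t$. Feeding this into \eqref{gd} and using that $X_t$ is a diffeomorphism of $\mathbb{T}^d$ yields
\[
\partial_t v_t+(v_t\cdot\nabla)v_t=-\nabla\frac{\delta}{\delta\rho_t}\mathcal{F}(\rho_t).
\]
The identity $(v\cdot\nabla)v=\tfrac12\nabla|v|^2+(\nabla\times v)\times v$ shows that, whenever $v_t=\nabla\Phi_t$, the momentum equation collapses to the Hamilton--Jacobi equation
\[
\partial_t\Phi_t+\tfrac{1}{2}|\nabla\Phi_t|^2=-\frac{\delta}{\delta\rho_t}\mathcal{F}(\rho_t)+C(t),
\]
which, together with the continuity equation, is exactly the dual-coordinate system of Proposition \ref{leg}; then \eqref{geo} follows immediately from that proposition.

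The main obstacle is therefore the third step: showing that $v_t$ remains a gradient for all $t\in[0,T)$, given that $v_0=\nabla\Phi$. My preferred approach is constructive. Solve the Hamilton--Jacobi equation above with initial data $\Phi_0=\Phi$ along the density $\rho_t=X_t\#\mu$, and consider its characteristic ODE $\dot Y_t=\nabla\Phi_t(Y_t)$. Differentiating once more and using HJ shows $\ddot Y_t=-\nabla\tfrac{\delta\mathcal{F}}{\delta\rho_t}(Y_t)$, which is precisely \eqref{gd}; with the matching initial data $Y_0=x$, $\dot Y_0=\nabla\Phi(x)$, uniqueness of the characteristic ODE forces $Y_t(x)=X_t(x)$ and hence $\dot X_t=\nabla\Phi_t\circ X_t$, i.e.\ $v_t=\nabla\Phi_t$. (An alternative would be to transport the vorticity $\omega_t=dv_t^\flat$ by the momentum equation, observe $\omega_0=0$, and invoke preservation of the de Rham class on $\mathbb{T}^d$ to conclude $v_t$ is exact; but the HJ-characteristic route keeps the presentation parallel to the Lagrangian/Hamiltonian dichotomy developed in Section \ref{sec3} and avoids cohomological side conditions.) Once $v_t=\nabla\Phi_t$ is established, plugging into the momentum equation gives HJ, Proposition \ref{leg} applies, and $\rho_t$ solves \eqref{geo}.
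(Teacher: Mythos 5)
Your proposal is correct and follows essentially the same route as the paper's proof: pass to the Eulerian velocity $v_t$ with $\dot X_t=v_t\circ X_t$, derive the continuity equation from the push-forward and the momentum equation from \eqref{gd}, reduce the momentum equation to a Hamilton--Jacobi equation once $v_t=\nabla\Phi_t$, and then conclude via the dual-coordinate formulation of Proposition \ref{leg}. The only substantive difference is that you explicitly justify why $v_t$ remains a gradient on $[0,T)$ (via HJ characteristics and ODE uniqueness, or alternatively vorticity transport), a step the paper handles by simply asserting ``we first construct a function $\Phi(t,x)$ such that $v(t,x)=\nabla\Phi(t,x)$''; this is a refinement of the same argument rather than a different approach.
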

\begin{proof}
Denote $\frac{d}{dt}X_t(x)=v(t,X_t(x))$, \eqref{gd} can be rewritten as  
\begin{equation*}
\begin{cases}
&\frac{d}{dt}X_t(x)=v(t, X_t(x))\\
&\frac{d}{dt}v(t, X_t(x))=-\nabla_{X_t}\frac{\delta}{\delta\rho(t,X_t(x))}\mathcal{F}(\rho_t).
\end{cases}
\end{equation*}
On one hand, by differentiating $\frac{d}{dt}X_t(x)=v(t, X_t(x))$, we obtain for any $x$, 
\begin{equation*}
\begin{split}
-\nabla_{X_t}\frac{\delta}{\delta\rho(t,X_t(x))}\mathcal{F}(\rho_t)=&\frac{d}{dt}v(t,X_t(x))\\
=&\partial_tv(t, X_t(x))+\nabla v(t,X_t(x))\cdot \frac{d}{dt}X_t(x)\\
=&\partial_tv(t, X_t(x))+\nabla v(t,X_t(x))\cdot v(t,X_t(x)).
\end{split}
\end{equation*}
Thus 
\begin{equation}\label{v}
\partial_tv(t,x)+\nabla v(t,x)\cdot v(t,x)=-\nabla\frac{\delta}{\delta\rho(t,x)}\mathcal{F}(\rho).
\end{equation}
On the other hand, we demonstrate that $\rho_t=X_t\#\mu$ solves the continuity equation 
\begin{equation}\label{c}
\partial_t\rho(t,x)+\nabla\cdot(\rho(t,x) v(t,x))=0.
\end{equation}
We shall show that for any test function $\psi\in C^{\infty}(\mathbb{T}^d)$, 
\begin{equation*}
\frac{d}{dt}\int_{\mathbb{T}^d}\psi(x)\rho(t,x)dx=-\int_{\mathbb{T}^d}\nabla\cdot(\rho(t,x)v(t,x))\psi(t,x)dx\equiv\int_{\mathbb{T}^d}(\nabla\psi(t,x)\cdot v(t,x))\rho(t,x)dx.
\end{equation*}
By the definition of push-forward $\rho_t=X_t\#\mu$, we have 
\begin{equation*}
\int_{\mathbb{T}^d}\psi(x)\rho(t,x)dx=\int_{\mathbb{T}^d}\psi(X_t(x))\mu(x)dx.
\end{equation*}
Then
\begin{equation*}
\begin{split}
\frac{d}{dt}\int_{\mathbb{T}^d}\psi(x)\rho(t,x)dx=&\frac{d}{dt}\int_{\mathbb{T}^d}\psi(X_t(x))\mu(x)dx=\int_{\mathbb{T}^d}\frac{d}{dt}\psi(X_t(x))\mu(x)dx\\
=&\int_{\mathbb{T}^d}\big(\nabla\psi(X_t(x))\cdot \frac{d}{dt}X_t(x)\big)\mu(x)dx\\
=&\int_{\mathbb{T}^d}\big(\nabla\psi(X_t(x))\cdot v(t,X_t(x))\big)\mu(x)dx\\
=&\int_{\mathbb{T}^d}\big(\nabla\psi(x)\cdot v(t,x)\big)\rho(t,x)dx,
\end{split}
\end{equation*}
where the last equality is from the definition of push-forward, i.e. $\rho_t=X_t\#\mu$. 
Thus $(\rho(t,x), v(t,x))$ solves the system of \eqref{v} and \eqref{c}.

We next demonstrate that the system of \eqref{v}, \eqref{c} can be written into a single equation \eqref{geo}. We first construct a function $\Phi(t,x)$ such that $v(t,x)=\nabla\Phi(t,x)$, $\Phi(0,x)=\Phi(x)$. We check that equation \eqref{v} is equivalent to 
\begin{equation*}
\nabla\big(\partial_t\Phi_t+\frac{1}{2}(\nabla\Phi_t)^2+\frac{\delta}{\delta{\rho_t}}\mathcal{F}(\rho_t)\big)=0.
\end{equation*}
In other words, $$\partial_t\Phi_t+\frac{1}{2}(\nabla\Phi_t)^2+\frac{\delta}{\delta{\rho_t}}\mathcal{F}(\rho_t)=C(t),$$ for some constant function $C(t)$.
From \eqref{c}, $\Phi_t=(-\Delta_{\rho_t})^{\dd}\partial_t\rho_t$. Substituting it into the above, we have  
\begin{equation*}
\partial_t\big((-\Delta_{\rho_t})^{\dd}\partial_t\rho_t\big)+\frac{1}{2}\Big(\nabla ((-\Delta_{\rho_t})^{\dd}\partial_t\rho_t)^2  \Big)+\frac{\delta}{\delta{\rho_t}}\mathcal{F}(\rho_t)=C(t).
\end{equation*}
From the equality \eqref{aa}, we derive 
\begin{equation*}
(-\Delta_{\rho_t})^{\dd}\partial_{tt}\rho_t+\Delta_{\rho_t}^{\dd}\Delta_{\partial_t\rho_t}\Delta_{\rho_t}^{\dd}\partial_t\rho_t+\frac{1}{2}(\nabla\Delta_{\rho_t}^{\dd}\partial_t\rho_t)^2+\frac{\delta}{\delta{\rho_t}}\mathcal{F}(\rho_t)=C(t).
\end{equation*}
Applying operator $(-\Delta_{\rho_t})$ on both sides of the above equation, we prove that $\rho_t$ satisfies equation \eqref{geo}.
\end{proof}
\section{Examples}\label{sec4}
In this section, we demonstrate that many well-known equations related to densities can be recast in the formalism of Hamiltonian flows in density manifold.
\begin{example}[Linear Vlasov equation]
Given a potential $V\in C^{\infty}(\mathbb{T}^d)$. Consider a linear Vlasov equation 
\begin{equation*}
\frac{\partial f(t,x,v)}{\partial t}+v\cdot \nabla_x f(t,x,v)-\nabla V(x)\cdot \nabla_vf(t,x,v)=0.
\end{equation*}
It represents the evolutionary of density $f(t,x,v)$ on $\mathbb{T}_x^d\times \mathbb{R}_v^d$ for particles moving with a force based on a potential. In other words, $f(t,x,v)$ is the transition density of  $(X_t, v_t)$ satisfying
\begin{equation*}
\begin{cases}
&\frac{d}{dt} X_t=v_t\\
&\frac{d}{dt} v_t=-\nabla V(X_t).
\end{cases}
\end{equation*}
On the other hand, the first order ODE system can be rewritten as the second order ODE
\begin{equation*}
\ddot X_t=-\nabla V(X_t).
\end{equation*}
From Proposition \ref{pt}, the density of  $X_t$ on $\mathbb{T}_x^d$, i.e. $\rho(t,x)=\int_{\mathbb{R}^d}f(t,x,v)dv$, satisfies the transition equation
\begin{equation*}
\partial_{tt}\rho_t-\Big(\Delta_{\partial_t\rho_t}\Delta_{\rho_t}^{\dagger}\partial_t\rho_t+\frac{1}{2}\Delta_{\rho_t}(\nabla \Delta_{\rho_t}^{\dagger}\partial_t\rho_t)^2\Big)=\nabla\cdot(\rho_t\nabla V(x)).
\end{equation*}
It is a Hamiltonian flow \eqref{SS} in density manifold w.r.t. the linear potential energy 
\begin{equation*}
\mathcal{F}(\rho)=\int_{\mathbb{T}^d}V(x)\rho(x)dx.
\end{equation*}
\end{example}
\begin{example}[Nonlinear Vlasov equation]
Given an interaction potential $W\in C^{\infty}(\mathbb{T}^d)$. Consider a nonlinear Vlasov equation 
\begin{equation*}
\begin{cases}
&\frac{\partial f(t,x,v)}{\partial t}+v\cdot \nabla_x f(t,x,v)-\nabla \bar W(x,\rho)\cdot \nabla_vf(t,x,v)=0\\
&\bar W(x,\rho)=\int_{\mathbb{T}^d}W(|x-y|)\rho(t,y)dy, ~\rho(t,x)=\int_{\mathbb{R}^d}f(t,x,v)dv.
\end{cases}
\end{equation*}
The above equation represents that particles evolve with a force based on an interaction potential $\bar W$, which is created by all of particles. In this case, $f(t,x,v)$ is the density equation of $(X_t, v_t)$ satisfying
\begin{equation*}
\begin{cases}
&\frac{d}{dt} X_t=v_t\\
&\frac{d}{dt} v_t=-\nabla \bar W(X_t, \rho_t),
\end{cases}
\end{equation*}
where $\rho_t$ is the density function of $X_t$. Similar as the first example, the density of  $X_t$ satisfies the transition equation 
\begin{equation*}
\partial_{tt}\rho_t-\Big(\Delta_{\partial_t\rho_t}\Delta_{\rho_t}^{\dagger}\partial_t\rho_t+\frac{1}{2}\Delta_{\rho_t}(\nabla \Delta_{\rho_t}^{\dagger}\partial_t\rho_t)^2\Big)=\nabla\cdot\Big(\rho_t\nabla \bar W(x,\rho_t)\Big).
\end{equation*}
It is a Hamiltonian flow in density manifold w.r.t. the interaction potential energy 
\begin{equation*}
\mathcal{F}(\rho)=\frac{1}{2}\int_{\mathbb{T}^d}\int_{\mathbb{T}^d}W(|x-y|)\rho(x)\rho(y)dxdy.
\end{equation*}
\end{example}

\begin{example}[Schr{\"o}dinger equation]
Given a potential $V\in C^{\infty}(\mathbb{T}^d)$. Consider a linear Schr{\"o}dinger equation 
\begin{equation*}
i\partial_t\Psi(t,x)=-\frac{1}{2}\Delta\Psi(t,x)+V(x)\Psi(t,x).
\end{equation*}
 Here $\Psi$ is the complex wave function of the quantum system. The complex wave equation can be related to the density function by ``Madelung'' (Bohm) transform \begin{equation*}
\Psi(t,x)=\sqrt{\rho(t,x)}e^{-i\Phi(t,x)}.
\end{equation*}
Here $\rho(t,x)$ is a density function on $\mathbb{T}_x^d$ and $\Phi(t,x)$ is a potential function. Then $(\rho(t,x), \Phi(t,x))$ satisfies the following pair of equations
\begin{equation*}
\begin{cases}
&\partial_t\rho_t+\nabla\cdot(\rho_t\nabla\Phi_t)=0\\
&\partial_t\Phi_t+\frac{1}{2}(\nabla\Phi_t)^2=-V(x)-\frac{1}{8}\frac{\delta}{\delta\rho_t}\int_{\mathbb{T}^d}(\nabla\log\rho_t)^2\rho_tdx.
\end{cases}
\end{equation*}
Here $\int_{\mathbb{T}^d}(\nabla\log\rho)^2\rho dx$ represents a functional in density manifold, named Fisher information.  
From Proposition \ref{leg}, the density $\rho(t,x)$ satisfies 
\begin{equation*}
\partial_{tt}\rho_t-\Big(\Delta_{\partial_t\rho_t}\Delta_{\rho_t}^{\dagger}\partial_t\rho_t+\frac{1}{2}\Delta_{\rho_t}(\nabla \Delta_{\rho_t}^{\dagger}\partial_t\rho_t)^2\Big)=\nabla\cdot\Big(\rho_t\nabla (V(x)+\frac{1}{8}\frac{\delta}{\delta\rho_t}\int_{\mathbb{T}^d}(\nabla\log\rho_t)^2\rho_tdx)\Big).
\end{equation*}
It is a Hamiltonian flow \eqref{SS} in density manifold w.r.t. the linear potential energy plus the Fisher information
\begin{equation*}
\mathcal{F}(\rho)=\int_{\mathbb{T}^d}V(x)\rho(x)dx+\frac{1}{8}\int_{\mathbb{T}^d}(\nabla\log\rho(x))^2\rho(x)dx.
\end{equation*}
Similar formulation is also true for nonlinear Schr{\"o}dinger equations. 
\end{example}

\begin{example}[Schr{\"o}dinger Bridge problem]
Consider a Schr{\"o}dinger system \cite{CP}
\begin{equation*}
\partial_t\eta_t=\frac{1}{2}\Delta\eta_t,\qquad\partial_t\eta^*_t=-\frac{1}{2}\Delta\eta^*_t.
\end{equation*}
 Here $\eta, \eta^*\colon \mathbb{T}^d\rightarrow \mathbb{R}$ are real value functions. The complex wave equation can be related to the density function by ``Hopf-Cole'' transformation
 \begin{equation*}
\eta=\sqrt{\rho}e^{S/2},\quad \eta^*=\sqrt{\rho}e^{-S/2}.
\end{equation*}
Here $\rho(t,x)$ is a density function on $\mathbb{T}_x^d$ and $\Phi(t,x)$ is a potential function. Then $(\rho(t,x), \Phi(t,x))$ satisfies the following pair of equations
\begin{equation*}
\begin{cases}
&\partial_t\rho_t+\nabla\cdot(\rho_t\nabla\Phi_t)=0\\
&\partial_t\Phi_t+\frac{1}{2}(\nabla\Phi_t)^2=\frac{1}{8}\frac{\delta}{\delta\rho_t}\int_{\mathbb{T}^d}(\nabla\log\rho_t)^2\rho_tdx.
\end{cases}
\end{equation*}
Here $\int_{\mathbb{T}^d}(\nabla\log\rho)^2\rho dx$ represents a functional in density manifold, named Fisher information.  
From Proposition \ref{leg}, the density $\rho(t,x)$ satisfies 
\begin{equation*}
\partial_{tt}\rho_t-\Big(\Delta_{\partial_t\rho_t}\Delta_{\rho_t}^{\dagger}\partial_t\rho_t+\frac{1}{2}\Delta_{\rho_t}(\nabla \Delta_{\rho_t}^{\dagger}\partial_t\rho_t)^2\Big)=-\nabla\cdot\big(\rho_t\nabla(\frac{1}{8}\frac{\delta}{\delta\rho_t}\int_{\mathbb{T}^d}(\nabla\log\rho_t)^2\rho_tdx)\big).
\end{equation*}
It is a Hamiltonian flow \eqref{SS} in density manifold w.r.t. negative Fisher information
\begin{equation*}
\mathcal{F}(\rho)=-\frac{1}{8}\int_{\mathbb{T}^d}(\nabla\log\rho(x))^2\rho(x)dx.
\end{equation*}
\end{example}

\section{Discussions}
To summarize, we demonstrate the Euler-Lagrange equations, and associated Hamiltonian flows in density manifold with Lagrangian formalism. We show that the Hamiltonian flows in density space are probability transition equations of classical Hamiltonian ODEs. It mathematically demonstrates the intuition: 
The density of Hamiltonian flow in sample space is Hamiltonian flow in density manifold.

\textbf{Acknowledgments:} The authors thank Prof. Chongchun Zeng for many stimulating discussions. 

\end{document}